\documentclass[10pt,a4paper,reqno]{amsart}

\usepackage[T1]{fontenc}
\usepackage{lmodern}
\usepackage[british]{babel}
\usepackage{microtype}
\usepackage{needspace}
\usepackage{amsmath,amssymb,mathtools}
\usepackage[mathscr]{euscript}
\usepackage[hidelinks]{hyperref}
\allowdisplaybreaks[2]
\numberwithin{equation}{section}
\theoremstyle{plain}
\newtheorem{theorem}{Theorem}[section]
\newtheorem{proposition}[theorem]{Proposition}
\newtheorem{lemma}[theorem]{Lemma}
\theoremstyle{definition}
\newtheorem{definition}[theorem]{Definition}
\theoremstyle{remark}
\newtheorem{remark}[theorem]{Remark}

\DeclareMathOperator{\Ric}{Ric}
\DeclareMathOperator{\Span}{span}
\renewcommand{\Re}{\operatorname{Re}}
\renewcommand{\Im}{\operatorname{Im}}
\newcommand{\dd}{\mathrm{d}}
\newcommand{\I}{\mathrm{i}}
\newcommand{\bk}{\mathrm{k}}
\newcommand{\e}{\mathrm{e}}
\newcommand{\bpd}{\bar\partial}
\newcommand{\bR}{\mathbb{R}}
\newcommand{\bC}{\mathbb{C}}
\newcommand{\cM}{\mathscr{M}}
\newcommand{\cP}{\mathscr{P}}
\newcommand{\cW}{\mathscr{W}}
\newcommand{\cH}{\mathscr{H}}
\newcommand{\cL}{\mathcal{L}}
\newcommand{\cN}{\mathcal{N}}

\title[SMOOTH QUASI--EINSTEIN CR LIFTS]
{Smooth Quasi--Einstein Lifts of Locally Embeddable Three-Dimensional CR Manifolds}
\author{Masoud Ganji}
\address{Masoud Ganji\newline
\hspace*{\parindent}School of Science and Technology\newline
\hspace*{\parindent}University of New England\newline
\hspace*{\parindent}Armidale NSW 2351\newline
\hspace*{\parindent}Australia}
\email{mganjia2@une.edu.au}

\subjclass[2020]{Primary 32V05, 53B30; Secondary 35H20, 83C15}
\keywords{CR geometry, local embeddability, quasi--Einstein metric,
shearfree null congruence, contact operator, Petrov classification}

\hypersetup{
  pdftitle={Smooth Quasi--Einstein Lifts of Locally Embeddable Three-Dimensional CR Manifolds},
  pdfauthor={Masoud Ganji},
  pdfsubject={Smooth Quasi--Einstein Lifts of Locally Embeddable Three-Dimensional CR Manifolds}
}

\begin{document}

\begin{abstract}
We prove a $C^\infty$ analogue of the real-analytic quasi--Einstein
lifting theorem of Hill, Lewandowski and Nurowski
\cite[Theorem~2.5]{HLN2008}.
Every smooth, locally CR-embeddable, strictly pseudoconvex
three-dimensional CR manifold admits, for each real cosmological
constant, a local smooth Lorentzian quasi--Einstein lift whose
null congruence is geodesic, shearfree and twisting and whose
Weyl tensor has Petrov type~$\mathrm{II}$ or $\mathrm{D}$.
The result extends local lifting beyond the Sasakian setting of
the global constructions in \cite{GSS2020} and the local results
in \cite{GGS2026}.
The proof establishes smooth local solvability of the reduced
semilinear contact equation by combining an $L^\infty$ contraction
with subelliptic regularity.
\end{abstract}

\maketitle

\section{Introduction and main result}\label{sec:introduction}

Twisting shearfree congruences of null geodesics provide a direct link between
four-dimensional Lorentzian geometry and three-dimensional CR
geometry. This classical correspondence \cite{Rob,RT1} has been
developed through the study of shearfree null congruences and
Robinson structures \cite{Tafel1,Traut2,Arman2}.
Following \cite{AGS} and \cite[Lemma~2.1]{AGSS}, we describe
the shearfree condition by the Lie-derivative relation
\eqref{eq:shearfree}.
The connection between shearfree geometry and algebraic speciality
is expressed by the Goldberg--Sachs theorem
and its refinements \cite{GoldSa,GHN}.

The local lifting problem asks which CR structures admit Lorentzian
metrics satisfying the Einstein equations with a cosmological term
and aligned pure radiation. In this paper, \emph{quasi--Einstein}
means
\begin{equation}\label{eq:quasi-einstein}
 \Ric(g)=\Lambda g+\Phi\,\lambda\otimes\lambda,
 \qquad \Lambda\in\bR,
\end{equation}
where $\lambda$ is the pulled-back CR contact form and $\Phi$ is a
smooth real function on the spacetime.

For the metric construction, we recall the description used in
\cite{GSS2020}. Let $\bk$ be a nowhere-zero null generator of a
twisting shearfree congruence. Its local leaf space $M$ carries a
CR structure $(D,J)$: the rank-two
distribution $D$ is the projection of $\bk^\perp$ to $M$, and
$J$ is induced by the conformal structure on
$\bk^\perp/\langle\bk\rangle$, after a local orientation is
chosen \cite{RT1}.
Locally, this CR structure is represented by a pair
$(\mu,\lambda)$, where $\lambda$ is a real one-form and $\mu$
is a complex one-form satisfying
\[
 D=\ker\lambda,\qquad
 \mu(JV)=\I\mu(V)\quad(V\in D),\qquad
 \lambda\wedge\mu\wedge\bar\mu\ne0.
\]
We write $[(\mu,\lambda)]$ for the equivalence class of pairs
defining the same CR structure. Representatives are related by
\begin{equation}\label{eq:CR-equivalence}
 \lambda'=a\lambda,\qquad \mu'=b\mu+h\lambda,
 \qquad a>0,\quad b\ne0,
\end{equation}
where $a$ is real and $b,h$ are complex functions.

For one-forms $\omega,\gamma$, we use $\vee$ and juxtaposition to
denote the symmetrised tensor product
\[
 \omega\vee\gamma=\omega\gamma
   =\tfrac12(\omega\otimes\gamma+\gamma\otimes\omega).
\]
On a local product $\cM=M\times I$, let $r$ be the coordinate
on the real interval $I$, and pull back the base forms without
changing their notation. Every shearfree Lorentzian metric
associated with the CR manifold $(M,[(\mu,\lambda)])$ can be
written locally in the form \cite{RT1,HLN2008}
\begin{equation}\label{eq:metric-form}
 g=2\cP^2\left[
       \mu\bar\mu+
       \lambda\left(\dd r+\cW\mu+\overline{\cW}\bar\mu+\cH\lambda\right)
       \right],
\end{equation}
where $\cP>0$ and $\cH$ are smooth real functions on $\cM$,
and $\cW$ is a smooth complex function.
Any metric in this family is called a \emph{lift} of the underlying
CR structure \cite{GSS2020}.
The vertical null field $\bk=\partial_r$ generates its shearfree
congruence and satisfies $\bk^\flat=\cP^2\lambda$.
Strict pseudoconvexity gives $\lambda\wedge\dd\lambda\ne0$,
so this congruence is twisting.

The relation between the field equations and local CR embeddability
was established in \cite{LNT} and developed systematically in
\cite{HLN2008}. The resulting curvature reduction determines
the fibre dependence of $\cP,\cW,\cH$ and leaves a system of
equations on the CR manifold. The real-analytic lifting theorem
\cite[Theorem~2.5]{HLN2008} asserts that every real-analytic,
strictly pseudoconvex three-dimensional CR manifold admits a local
quasi--Einstein lift of Petrov type~$\mathrm{II}$ or $\mathrm{D}$ for each prescribed
$\Lambda\in\bR$. The extension of this theorem to sufficiently
smooth embeddable CR structures was conjectured in
\cite[Remark~3.25]{HLN2008}. The missing step was the local
existence of a positive smooth solution of the reduced scalar
curvature equation. The main result of this paper supplies that
step and proves the lifting theorem in the
$C^\infty$, locally embeddable category.

The distinction between the real-analytic and smooth categories
is substantial. A smooth abstract
three-dimensional CR structure need not be locally embeddable;
local CR embeddability is therefore an assumption of our theorem.
Embedding data solve the first-order part of the curvature system,
but the nonlinear scalar equation still requires an independent
smooth existence argument.

Our earlier work proved global and local smooth lifting results
under Sasakian hypotheses. In \cite[Theorem~1.1]{GSS2020}, we
established global quasi--Einstein lifts of Petrov type~$\mathrm{II}$ or $\mathrm{D}$
for a class of smooth Sasakian hypersurfaces over the complex
plane, under asymptotic conditions on their defining potentials
and with $\Lambda>0$. The same paper also obtained lifting
results for tubular CR structures with arbitrary $\Lambda$.
In our subsequent work \cite[Corollary~6.2]{GGS2026}, we proved
local smooth lifting for regular Sasakian three-manifolds for
every real cosmological constant. These results established smooth
lifting in the Sasakian setting, where symmetry permits an
elliptic reduction on a two-dimensional quotient. The present
theorem extends the local existence result to every smooth,
locally embeddable, strictly pseudoconvex three-dimensional CR
structure, without imposing a Sasakian or CR symmetry hypothesis.

We retain the metric form \eqref{eq:metric-form} and use the
curvature reduction recalled in Section~\ref{subsec:curvature-reduction}.
Local CR embedding data supply the first-order curvature constraint.
The remaining scalar equation acts on the three-dimensional CR
manifold. Its principal part is a contact sum of squares with a
smooth horizontal drift, and its nonlinear terms have cubic and
inverse-cubic dependence on the unknown. The contact bracket
condition supplies subelliptic regularity, while the inverse-cubic
term makes positivity essential to the existence argument.

The key analytic observation is that the local fundamental
solution in \cite{BBMP2017} has integral $O(\varepsilon^2)$
over a small control ball. This estimate yields a contraction
in $L^\infty$ around any prescribed positive constant and keeps
the solution uniformly away from zero.
The local Sobolev estimates of \cite{RS1976} then bootstrap the
bounded distributional solution to $C^\infty$.
After shrinking the base neighbourhood, the scalar coefficient can
be chosen uniformly as close as desired to the prescribed constant,
relative to the normalised coframe. The argument imposes no sign
restriction on the cosmological constant. Reconstructing the metric
from this solution and the embedding data gives the following
smooth local lifting theorem.

\Needspace{24\baselineskip}
\begin{theorem}\label{thm:main}
Let $(M^3,D,J)$ be a smooth, locally embeddable, strictly
pseudoconvex CR manifold. Fix $x_0\in M$, $\Lambda\in\bR$,
$p_*>0$ and $0<\delta<p_*/2$.
There exist a neighbourhood $U$ of $x_0$, a normalised CR coframe
$(\mu,\lambda)$ on $U$ with $\mu=\dd\zeta$ for a CR function $\zeta$,
and an open interval $I\ni0$, $I\subset(-\pi,\pi)$, with the following
properties.

There are smooth functions
\[
 p:U\longrightarrow(0,\infty),\qquad
 m:U\longrightarrow\bC\setminus\{0\},
\]
and smooth functions $\cP>0$, $\cH$, $\Phi$ (real) and $\cW$ (complex)
on $\cM=U\times I$ such that the Lorentzian metric
\eqref{eq:metric-form} satisfies
\eqref{eq:quasi-einstein}. Moreover,
\begin{equation}\label{eq:p-control-main}
 |p-p_*|\le\delta\quad\hbox{on }U,\qquad
 \cP(x,r)=\frac{p(x)}{\cos(\frac{r}{2})}.
\end{equation}
In the null coframe \eqref{eq:null-coframe},
the Weyl scalars satisfy
\begin{equation}\label{eq:Psi2-main}
 \Psi_0=\Psi_1=0,\qquad
 \Psi_2=\frac{(1+\e^{\I r})^3}{2p^6}\,m\ne0.
\end{equation}
Consequently, the metric has Petrov type~$\mathrm{II}$ or $\mathrm{D}$ at every point of
$\cM$.
\end{theorem}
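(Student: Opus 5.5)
The plan is to follow the curvature reduction of \cite{HLN2008}, recalled in Section~\ref{subsec:curvature-reduction}. That reduction splits the quasi--Einstein condition into three pieces. The first piece fixes the fibre dependence, $\cP=p/\cos(r/2)$, with $\cW$ and $\cH$ explicit in $r$ in terms of base data. The second is a first-order constraint on the CR manifold. The third is a single scalar equation for $p$ on $U$. I will satisfy the first-order constraint using local embedding data and then solve the scalar equation by a perturbative fixed-point argument.

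\emph{Embedding data.} Local embeddability gives CR functions $\zeta,u$ near $x_0$ whose differentials are independent. Set $\mu=\dd\zeta$ and choose $\lambda$ so that $\dd\mu=0$ and $\dd\lambda=\I\mu\wedge\bar\mu+(c\mu+\bar c\bar\mu)\wedge\lambda$; this is the normalised coframe. In the reduction of \cite{HLN2008}, the first-order constraint asks for a closed complex form, or equivalently a CR function, determined by these data. That constraint is then met by the second CR function. The auxiliary function $m$ comes from this choice as a nonvanishing CR-type quantity. If necessary I multiply it by a CR function that is nonzero at $x_0$ and shrink $U$, so that $m\ne0$ on $U$.

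\emph{Scalar equation.} What remains is an equation of the form
\[
 \cL p:=-(Z\bar Z+\bar Z Z)p+ \text{(horizontal drift)}(p)=F(x)+a(x)p^3+b(x,m)p^{-3},
\]
where $Z,\bar Z$ are dual to $\mu,\bar\mu$ and all coefficients are smooth. Because the frame is contact, $\Re Z$, $\Im Z$ and their bracket span $TM$, so $\cL$ is a Hörmander sum of squares. Write $p=p_*+v$. I then look for $v$ solving $v=\Gamma_\varepsilon\bigl[\chi_\varepsilon(N(p_*+v)-\cL p_*)\bigr]$, where $\Gamma_\varepsilon$ is the local fundamental solution of \cite{BBMP2017}, $\chi_\varepsilon$ cuts off to the control ball of radius $\varepsilon$, and $N$ is the nonlinear right-hand side. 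The key estimate is $\sup_x\int_{B_\varepsilon}|\Gamma(x,y)|\,\dd y=O(\varepsilon^2)$. This follows from $|\Gamma|\lesssim d^2/|B_d|$ together with the homogeneous dimension being $4$. On the ball $\{\|v\|_\infty\le\delta\}$ the nonlinearity is Lipschitz, with a constant depending only on $p_*$ and $\delta$; the $p^{-3}$ term is harmless because $p\ge p_*/2$. Consequently, for small $\varepsilon$ the map is a contraction of that ball into itself. This gives $|p-p_*|\le\delta$, and $p$ is a bounded distributional solution of $\cL p=N(p)$ on the smaller ball where $\chi_\varepsilon=1$; I take this ball as $U$.

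\emph{Regularity and reconstruction.} Since $N(p)\in L^\infty$ and $\cL$ is hypoelliptic, the estimates of \cite{RS1976} give $p\in S^{2,q}_{loc}$ for all $q<\infty$, hence $p$ is Hölder continuous. The right-hand side is then in $S^{k,q}$ whenever $p$ is, which gains two derivatives at each step, and bootstrapping yields $p\in C^\infty$. I then reconstruct $\cW$, $\cH$ and $\Phi$ from the reduction formulas, taking $I\subset(-\pi,\pi)$ so that $\cos(r/2)>0$. Next I compute the Weyl scalars in the null coframe \eqref{eq:null-coframe}. The shearfree geodesic property forces $\Psi_0=\Psi_1=0$, and the explicit reduction gives $\Psi_2=(1+\e^{\I r})^3m/(2p^6)$. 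This is nonzero because $m\ne0$ and $|r|<\pi$, so the Petrov type is II or D.

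I expect the main obstacle to be the scalar step. Two things need care there: verifying the $O(\varepsilon^2)$ integral bound uniformly in $x$ for the fundamental solution of \cite{BBMP2017}, which holds for drifted sums of squares on small balls, and ensuring that the cut-off does not spoil the equation on $U$.
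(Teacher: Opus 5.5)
Your scalar step is sound and follows the paper's argument. It uses the local fundamental solution of \cite{BBMP2017}, the bound $\sup_x\int_{B_\varepsilon}|\gamma(x,y)|\,\dd y=O(\varepsilon^2)$ from homogeneous dimension four, a contraction on $\{\|v\|_\infty\le\delta\}$, and restriction to $\{\chi_\varepsilon=1\}$. Bootstrapping in Folland--Stein spaces $S^{k,q}$ (gaining two horizontal derivatives per step) instead of the paper's Euclidean $W^{j,4}_{\mathrm{loc}}$ (gaining one) is an equivalent variant. The exact form of your right-hand side, which omits the potential term $-2Vp$, does not affect the contraction.

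The genuine gap is the first-order step, which you describe rather than carry out. With $t\equiv0$ (a solution of $\partial t+(c-t)t=0$) and $s=0$, the condition $\Ric_{13}=0$ becomes $\partial m+3cm=0$. The theorem needs a solution that is nowhere zero on $U$, since this is exactly what gives $\Psi_2\ne0$. You never say what $m$ is, you assert its nonvanishing without reason, and your proposed repair fails on two counts:
\begin{itemize}
\item If $\bpd f=0$, then $\partial(fm)+3c(fm)=(\partial f)\,m$. Multiplying by a CR function therefore destroys the constraint; only $f$ with $\partial f=0$, i.e.\ conjugates of CR functions, are admissible.
\item Multiplying by a function that is nonzero at $x_0$ cannot remove a zero of $m$ at $x_0$. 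If instead $m(x_0)\ne0$, shrinking $U$ alone already suffices.
\end{itemize}

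The missing idea is the explicit choice $m=(\partial_0\bar\eta)^3$, with $\eta$ the second embedding function. Conjugating $\bpd\eta=0$ gives $\partial\bar\eta=0$. The commutator $[\partial,\partial_0]=-c\,\partial_0$ of the normalised frame then yields $\partial(\partial_0\bar\eta)=[\partial,\partial_0]\bar\eta=-c\,\partial_0\bar\eta$, hence $\partial m=-3cm$.

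Nonvanishing is also structural. Since $\mu=\dd\zeta$, one has $\partial_0\zeta=\mu(\partial_0)=0$. A zero of $\partial_0\eta$ would therefore put the nonzero vector $\partial_0$ in the kernel of $\dd(\zeta,\eta)$, contradicting the embedding property. So $m\ne0$ on the whole embedding neighbourhood, with no correction needed.

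A smaller imprecision: $\Psi_0=\Psi_1=0$ is not forced by the geodesic shearfree property alone. $\Psi_1=0$ also uses the Ricci conditions, via a Goldberg--Sachs type argument. Take it, together with the formula for $\Psi_2$, from the reduction of \cite{HLN2008}, as the paper does through Proposition~\ref{prop:curvature-reduction}.
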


The range estimate for $p$ refers to the normalised coframe.
The radiation coefficient $\Phi$ is determined by the metric and
may vanish. Section~\ref{sec:geometry} fixes the
geometric conventions and records the curvature reduction.
Section~\ref{sec:analysis} explains the Sobolev notation and proves
smooth local semilinear solvability. Section~\ref{sec:proof} gives
the complete proof of Theorem~\ref{thm:main}.

\section{CR geometry and curvature reduction}\label{sec:geometry}

We first recall the shearfree relation, then fix the CR coframe
and curvature conventions used in the proof.

\begin{definition}\label{def:shearfree}
On a Lorentzian $n$-manifold $(\cM,g)$, a \emph{null congruence}
is a foliation by null curves. It is \emph{shearfree} if a
nowhere-zero null vector field $\bk$ tangent to its leaves satisfies
\begin{equation}\label{eq:shearfree}
 \cL_{\bk}g=f g+\bk^\flat\vee\vartheta,\qquad
 \bk^\flat=g(\bk,\cdot),
\end{equation}
for a smooth real function $f$ and a smooth real one-form $\vartheta$.
\end{definition}

Relation \eqref{eq:shearfree} is equivalent to the local flow of
$\bk$ preserving $\bk^\perp$ and the conformal class of the
degenerate metric $g|_{\bk^\perp\times\bk^\perp}$
\cite[Lemma~2.1]{AGSS}; see also \cite{AGS,GGS2026}.
It implies that the integral curves of $\bk$ are geodesics,
possibly with a nonaffine parametrisation
\cite[Proposition~2.4]{AGSS}. Thus such a foliation is also called
a \emph{geodesic shearfree null congruence}.
In dimension four, it is twisting if
$\bk^\flat\wedge\dd\bk^\flat\ne0$.

\subsection{CR structures}

\begin{definition}
A three-dimensional CR manifold is a triple $(M,D,J)$, where $M$
is a smooth real three-dimensional manifold, $D\subset TM$ is a
rank-two distribution, and $J:D\to D$ is a smooth endomorphism
with $J^2=-\operatorname{Id}$.
\end{definition}

The complex line distributions $D^{1,0}$ and $D^{0,1}$ are the
$+\I$ and $-\I$ eigenspaces of $J$ in $D\otimes\bC$, respectively.
Thus $D^{1,0}$ is locally spanned by $V-\I JV$, for any nowhere-zero
local real section $V$ of $D$. In CR dimension one, formal
integrability is automatic. We assume strict pseudoconvexity:
for every such $V$, one has $[V,JV]\notin D$ at each point.
Equivalently, every nowhere-zero local real one-form $\lambda$
annihilating $D$ satisfies
\[
 \lambda\wedge\dd\lambda\ne0.
\]
We choose its sign so that $\dd\lambda(\,\cdot\,,J\,\cdot\,)$
is positive definite on $D$, and use this coorientation throughout.

The CR structure can also be represented by a real one-form
$\lambda$ and a complex one-form $\mu$ such that
\[
 D=\ker\lambda,\qquad
 \mu(JV)=\I\mu(V)\quad(V\in D),\qquad
 \lambda\wedge\mu\wedge\bar\mu\ne0.
\]
The dual frame to $(\mu,\bar\mu,\lambda)$ is written
$(\partial,\bpd,\partial_0)$; $\partial_0$ is real.

\begin{definition}
A smooth function $f:M\to\bC$ is a CR function if $\bpd f=0$.
The CR manifold is locally CR-embeddable if near every point there
are CR functions $\zeta,\eta$ for which
$(\zeta,\eta):M\to\bC^2$ is a smooth embedding.
\end{definition}

The tangential equation $\bpd f=0$ is independent of the choice of
representative. On a locally embeddable neighbourhood, choose a CR
function $\zeta$ with $\dd\zeta|_D\ne0$ and put $\mu=\dd\zeta$.
Strict pseudoconvexity permits a compatible choice of $\lambda$
for which
\begin{equation}\label{eq:normalized-structure}
 \dd\mu=0,\qquad
 \dd\lambda=\I\mu\wedge\bar\mu+
                 (c\mu+\bar c\,\bar\mu)\wedge\lambda.
\end{equation}
Indeed, the nonzero horizontal Levi coefficient of any compatible
contact form can be normalised to one by a positive rescaling.
The remaining terms in its exterior derivative determine the smooth
complex function $c$. A coframe satisfying
\eqref{eq:normalized-structure} is called a
\emph{normalised CR coframe}. All subsequent curvature formulae
are used in this coframe.

Evaluating the exterior derivatives on the dual frame gives
\begin{equation}\label{eq:commutators}
 [\partial,\bpd]=-\I\partial_0,\qquad
 [\partial,\partial_0]=-c\partial_0,\qquad
 [\bpd,\partial_0]=-\bar c\,\partial_0.
\end{equation}
There are no horizontal bracket components because
$\dd\mu=\dd\bar\mu=0$.
The identity $\dd^2\lambda=0$ gives
\begin{equation}\label{eq:c-integrability}
 \partial\bar c=\bpd c.
\end{equation}
%

\subsection{Null coframes and curvature reduction}
\label{subsec:null-coframes}
\label{subsec:curvature-reduction}

On a local product $\cM=M\times I$, all base forms and functions
are pulled back without changing their notation. For every metric
\eqref{eq:metric-form}, the field $\bk=\partial_r$ is null,
$\bk^\flat=\cP^2\lambda$, and \eqref{eq:shearfree} holds with
\[
 \begin{aligned}
 f&=2\partial_r\log\cP,\\
 \vartheta&=2\big[
    (\partial_r\cW)\mu+
    (\partial_r\overline{\cW})\bar\mu+
    (\partial_r\cH)\lambda\big].
 \end{aligned}
\]
Thus the vertical null congruence is geodesic and shearfree.
This family depends on the CR structure: a change of CR coframe can be absorbed
by the metric coefficients and the choice of fibre coordinate.

For \eqref{eq:metric-form}, define the complex null coframe
\begin{equation}\label{eq:null-coframe}
 \begin{aligned}
 \theta^1&=\cP\mu,&\qquad \theta^2&=\cP\bar\mu,\\
 \theta^3&=\cP\lambda,&
 \theta^4&=\cP\left(\dd r+\cW\mu+\overline{\cW}\bar\mu+\cH\lambda\right).
 \end{aligned}
\end{equation}
Its dual frame is
\begin{equation}\label{eq:null-frame}
 \begin{aligned}
 e_1&=\cP^{-1}(\partial-\cW\partial_r),&
 e_2&=\cP^{-1}(\bpd-\overline{\cW}\partial_r),\\
 e_3&=\cP^{-1}(\partial_0-\cH\partial_r),&
 e_4&=\cP^{-1}\partial_r.
 \end{aligned}
\end{equation}
Thus $g=2(\theta^1\theta^2+\theta^3\theta^4)$ has Lorentzian
signature $(3,1)$. We write $\Ric_{ij}=\Ric(e_i,e_j)$ and use
the Ricci and Weyl conventions of \cite{HLN2008}, with Weyl
scalars $\Psi_0,\ldots,\Psi_4$ relative to this frame.

The curvature reduction in
\cite[Proposition~3.7 and Theorem~3.8]{HLN2008} applies in this
null coframe.
Integration along the fibre introduces a positive real base
function $p$, a real phase $s$ and complex base functions $t,m$,
all independent of $r$.
The function $t$ parametrises the coefficient $\cW$,
and $m$ enters the coefficient $\cH$.
On a fibre interval where $\cos(\frac{r+s}{2})>0$, the general
formulae for $\cP$ and $\cW$ are
\begin{equation}\label{eq:PW-general}
 \begin{aligned}
 \cP&=\frac{p}{\cos(\frac{r+s}{2})},\\
 \cW&=\I\big(c+2\partial\log p-t\big)\e^{-\I(r+s)}
       +\I\big(c+2\partial\log p-2t\big)+\partial s.
 \end{aligned}
\end{equation}
Thus $t$ is an auxiliary metric unknown; the function $c$ is
already fixed by the normalised CR coframe.

\Needspace{12\baselineskip}
\begin{theorem}[\cite{HLN2008}]\label{thm:published-curvature}
For the normalised coframe \eqref{eq:normalized-structure} with
$\mu=\dd\zeta$, and a fixed $\Lambda\in\bR$, the curvature conditions
\[
 \Ric_{44}=\Ric_{24}=\Ric_{22}=0,\qquad
 \Ric_{12}=\Ric_{34}=\Lambda
\]
reduce to
\begin{equation}\label{eq:t-curvature}
 \partial t+(c-t)t=0
\end{equation}
and
\begin{equation}\label{eq:scalar-curvature-general}
 \begin{aligned}
 \Big[\partial\bpd+\bpd\partial+\bar c\,\partial+c\bpd
       +\tfrac12|c|^2+\tfrac34(\partial\bar c+\bpd c)
      -\tfrac32(\partial\bar t&+\bpd t+t\bar t)\Big]p\\
       &=\frac{m+\bar m}{p^3}+\frac23\Lambda p^3.
 \end{aligned}
\end{equation}
Conversely, smooth base solutions $p>0,t,m$ of these equations
give such a metric using \eqref{eq:PW-general} and the formula for
$\cH$ in \cite[eq.~(3.28)]{HLN2008}.
\end{theorem}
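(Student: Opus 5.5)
The plan is to verify the reduction by a direct Cartan-calculus computation in the null coframe \eqref{eq:null-coframe}, organised by the $r$-dependence of the curvature components. The Ricci conditions are imposed in the order $\Ric_{44}$, $\Ric_{24}$, $\Ric_{22}$, and finally $\Ric_{12},\Ric_{34}$. Each successive condition is an ODE in $r$ whose integration either introduces a new base function or imposes a constraint on the base functions already present.

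First, compute $\dd\theta^i$ from \eqref{eq:normalized-structure}, using $\dd\mu=0$ and the commutators \eqref{eq:commutators}. Read off the Levi-Civita connection one-forms $\Gamma_{ij}$ from $\dd\theta^i=-\Gamma^i{}_j\wedge\theta^j$ with $\Gamma_{(ij)}=0$ for the constant matrix $g=2(\theta^1\theta^2+\theta^3\theta^4)$. Then compute the curvature two-forms. Since $\bk=e_4\cP$ is geodesic and shearfree, $\Ric_{44}$ is a Raychaudhuri-type expression involving only $\partial_r$ of $\cP$ and the twist coming from $\I\mu\wedge\bar\mu$ in $\dd\lambda$. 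It reduces to $\partial_r^2\cP^{-1}+\tfrac14\cP^{-1}=0$, or an equivalent form. Its positive solutions are exactly $\cP=p/\cos(\tfrac{r+s}{2})$ with base functions $p>0$ and $s$ real, giving the first line of \eqref{eq:PW-general}.

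Next, substitute this $\cP$ into $\Ric_{24}$. This yields a linear first-order ODE in $r$ for $\cW$ of the form $\partial_r\cW+\I\cW=(\text{terms in }c,\partial\log p,\partial s)$, up to the precise coefficients. Its general solution is a particular part plus a homogeneous part proportional to $\e^{-\I(r+s)}$, whose free base coefficient is parametrised by $t$. This gives the second line of \eqref{eq:PW-general}.

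Then $\Ric_{22}$, with $\cP,\cW$ substituted, is a trigonometric polynomial in $r$. Once \eqref{eq:c-integrability} is used, its coefficients all collapse to a multiple of $\partial t+(c-t)t$, which yields \eqref{eq:t-curvature}. Here $\mu=\dd\zeta$ with $\bpd\zeta=0$ is what removes the horizontal torsion terms.

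Finally, impose $\Ric_{34}=\Lambda$ as a second-order ODE in $r$ for $\cH$. Integrating twice introduces the complex base function $m$ together with a particular solution built from $p,t,c,\Lambda$; this is \cite[eq.~(3.28)]{HLN2008}. With that $\cH$, the condition $\Ric_{12}=\Lambda$ becomes a trigonometric polynomial in $r$. Its $r$-independent coefficient should be, up to a factor, the difference of the two sides of \eqref{eq:scalar-curvature-general}. The other coefficients should vanish identically by \eqref{eq:t-curvature}, \eqref{eq:c-integrability} and the $\Ric_{34}$ relation, with the latter possibly fixing the coefficient of $\e^{\pm\I(r+s)}$ in $\cH$.

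The converse is then immediate: all steps were equivalences, since each ODE was solved in full generality. Hence base solutions $p>0$, $t$, $m$ determine $\cP,\cW,\cH$ satisfying the listed Ricci conditions.

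The main obstacle is the last step. One has to track the $r$-harmonics in $\Ric_{12}$ and show that only the zero mode survives. This is also where the specific operator $\partial\bpd+\bpd\partial+\bar c\partial+c\bpd+\dots$ and the zeroth-order term $-\tfrac32(\partial\bar t+\bpd t+t\bar t)$ must emerge. I would organise it by writing everything in terms of $u=\e^{\I(r+s)}$ and collecting powers of $u$. Since this is \cite[Proposition~3.7, Theorem~3.8]{HLN2008}, the computation can alternatively be quoted, after checking that our conventions (signs in \eqref{eq:normalized-structure}, $\mu=\dd\zeta$, and fibre coordinate $r$) match theirs.
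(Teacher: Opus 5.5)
\textbf{Comparison with the paper.} The paper gives no proof of this statement. It is imported from \cite[Proposition~3.7 and Theorem~3.8]{HLN2008}, so your fallback of quoting it after matching conventions is exactly the paper's route.

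\textbf{The gap at the $\Ric_{24}$ step.} Your direct-computation outline breaks here. This component is second order in $r$, not first order: it contains a nonzero multiple of $\cP^{-2}\partial_r^2\overline{\cW}$, and the conformal factor contributes no term that cancels it. Formula \eqref{eq:PW-general} already shows the problem. There the $t$-dependence of $\cW$ is $-\I t\,(\e^{-\I(r+s)}+2)$, which is not a multiple of $\e^{-\I(r+s)}$. So $t$ cannot be the free constant of an equation $\partial_r\cW+\I\cW=F(c,\partial\log p,\partial s)$.

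Put $\phi=\log\cP$, so that $\phi_r=\tfrac12\tan\tfrac{r+s}{2}$ and $\phi_{rr}-\phi_r^2=\tfrac14$. The homogeneous part of $2\cP^2\Ric_{14}=0$, the conjugate of $\Ric_{24}=0$, is then
\[
 \cW_{rr}+2(\phi_r-\I)\cW_r+(1-2\I\phi_r)\cW=0 .
\]
Its solution space is spanned by $\e^{-\I(r+s)}+2$, which is the $t$-direction, and by $\e^{3\I(r+s)}+2\e^{2\I(r+s)}$.

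The second branch is absent from \eqref{eq:PW-general} and must be removed using $\Ric_{22}=0$. Suppose $\cW$ contains $\beta\,(\e^{3\I(r+s)}+2\e^{2\I(r+s)})$. After clearing denominators (powers of $1+\e^{\I(r+s)}$), the top Fourier mode of $\cP^2\Ric_{11}=\overline{\cP^2\Ric_{22}}$ is a nonzero multiple of $\beta^2$, which forces $\beta=0$. Only after this do the remaining modes collapse as you predict. For $s=0$ one finds, up to the overall Ricci sign convention, $\cP^2\Ric_{11}=-2\big(\partial t+(c-t)t\big)\e^{\I r}/(1+\e^{\I r})$.

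So your claim that each ODE was solved in full generality is false at this step. The forward direction of the reduction rests on that claim, and excluding this branch is a missing step.

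\textbf{A related misidentification for $\cH$.} The formula \cite[eq.~(3.28)]{HLN2008} is the general solution of the trace condition $\Ric_{12}+\Ric_{34}=2\Lambda$, not of $\Ric_{34}=\Lambda$. By Proposition~\ref{prop:curvature-reduction}(1)--(2), the $\cH$ of \eqref{eq:calH} satisfies the trace condition for every $p$. It satisfies $\Ric_{34}=\Lambda$ only when the scalar equation holds.

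The $m$-part of $\cH$ is $16p^{-4}\cos^3(\tfrac r2)\,\Re(m\e^{\I r/2})$. It lies in the kernel of the trace operator but not in the kernel of the $\Ric_{34}$ operator. Otherwise the difference $\Ric_{34}-\Ric_{12}$ would annihilate it, and $m+\bar m$ could not appear in \eqref{eq:scalar-curvature-general}.

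If you solve $\Ric_{34}=\Lambda$ first, you obtain a different two-parameter family. Then $\Ric_{12}=\Lambda$ constrains several Fourier modes, not just the zero mode. The clean order is to determine $\cH$ from the trace, then impose the difference. In the difference, $\Im m$ drops out and $m+\bar m$ survives, giving \eqref{eq:scalar-curvature-general}. Your $\Ric_{44}$ step, $\partial_r^2\cP^{-1}+\tfrac14\cP^{-1}=0$, is correct.
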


The equation for $t$ is a first-order nonlinear PDE on $M$,
where $\partial$ is the complex vector field defined above.
The ordinary differential equations in the fibre variable $r$
have already been integrated in \eqref{eq:PW-general}.
We now choose the identically zero solution $t\equiv0$.
This choice works for every smooth coframe coefficient $c$.
Independently, the change of fibre coordinate $r\mapsto r+s$
normalises the phase to $s=0$.
We use $t=s=0$ throughout the construction below.
The $t$-dependent terms in
\eqref{eq:scalar-curvature-general} then vanish, leaving
\eqref{eq:scalar-curvature}. The additional mixed curvature
condition $\Ric_{13}=0$ is recorded in
Proposition~\ref{prop:curvature-reduction} below.

Fix $\Lambda\in\bR$, and let $p>0$ be real and $m$ complex
on the base, independent of $r$.
On $I\subset(-\pi,\pi)$ set
\begin{equation}\label{eq:PW}
 \cP=\frac{p}{\cos(\frac{r}{2})},\qquad
 \cW=\I\big(c+2\partial\log p\big)(1+\e^{-\I r}),
\end{equation}
and define the base functions $Q$ (complex) and $h$ (real) by
\begin{equation}\label{eq:Q}
 \begin{aligned}
 Q={}&\frac{3m+\bar m}{p^4}+\frac23\Lambda p^2
 +\frac{2(\partial p)(\bpd p)
       -p(\partial\bpd p+\bpd\partial p)}{2p^2}
       -\frac{\I}{2}\partial_0\log p-\bpd c.
 \end{aligned}
\end{equation}
\begin{equation}\label{eq:h}
 \begin{aligned}
 h={}3\frac{m+\bar m}{p^4}+2\Lambda p^2+\frac{2(\partial p)(\bpd p)
       -p(\partial\bpd p+\bpd\partial p)}{p^2}-2\bpd c.
 \end{aligned}
\end{equation}
Then
\begin{equation}\label{eq:calH}
 \cH=\frac{m}{p^4}\e^{2\I r}
       +\frac{\bar m}{p^4}\e^{-2\I r}
       +Q\e^{\I r}+\bar Q\e^{-\I r}+h.
\end{equation}

\Needspace{6\baselineskip}
\begin{proposition}[\cite{HLN2008}]
\label{prop:curvature-reduction}
For the normalised coframe \eqref{eq:normalized-structure} and coefficients
\eqref{eq:PW}--\eqref{eq:calH} with $t=s=0$, the following assertions hold.
\begin{enumerate}
\item The metric \eqref{eq:metric-form} satisfies
\[
 \Ric_{44}=\Ric_{22}=\Ric_{24}=0,\qquad
 \Ric_{12}+\Ric_{34}=2\Lambda.
\]
\item The conditions $\Ric_{12}=\Ric_{34}=\Lambda$ hold if
and only if $p$ satisfies
\begin{equation}\label{eq:scalar-curvature}
 \begin{aligned}
 \Big[\partial\bpd+\bpd\partial+\bar c\,\partial+c\bpd
      +\tfrac12|c|^2
      +\tfrac34(\partial\bar c+\bpd c)\Big]p
   =\frac{m+\bar m}{p^3}+\frac23\Lambda p^3.
 \end{aligned}
\end{equation}
\item Assuming \eqref{eq:scalar-curvature}, the condition $\Ric_{13}=0$ is equivalent to
\begin{equation}\label{eq:m-eqn}
 \partial m+3cm=0.
\end{equation}
\item If \eqref{eq:scalar-curvature} and \eqref{eq:m-eqn} hold, then the metric satisfies
\eqref{eq:quasi-einstein} for a smooth real function $\Phi$, and
\begin{equation}\label{eq:Psi2-reconstruction}
 \Psi_0=\Psi_1=0,\qquad
 \Psi_2=\frac{(1+\e^{\I r})^3}{2p^6}\,m.
\end{equation}
\end{enumerate}
\end{proposition}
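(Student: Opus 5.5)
The plan is to verify the proposition by a direct Cartan-calculus computation in the null coframe \eqref{eq:null-coframe}, and to use Theorem~\ref{thm:published-curvature} to shorten the work.

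\textbf{Connection forms.} Using \eqref{eq:normalized-structure} with $\dd\mu=0$, I compute $\dd\theta^1,\dots,\dd\theta^4$ for general $\cP,\cW,\cH$. Writing $\dd\cP=(\partial\cP)\mu+(\bpd\cP)\bar\mu+(\partial_0\cP)\lambda+(\partial_r\cP)\dd r$ and expressing $\dd r=\theta^4/\cP-\cW\mu-\overline{\cW}\bar\mu-\cH\lambda$, each $\dd\theta^i$ becomes a combination of $\theta^j\wedge\theta^k$. The torsion-free condition with $g=2(\theta^1\theta^2+\theta^3\theta^4)$ then determines the six independent connection forms $\Gamma_{12},\Gamma_{13},\Gamma_{14},\Gamma_{23},\Gamma_{24},\Gamma_{34}$ algebraically; only the complex ones $\Gamma_{13},\Gamma_{14}$ and the pair $\Gamma_{12}+\Gamma_{34}$, $\Gamma_{12}-\Gamma_{34}$ are needed.

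\textbf{Curvature and the $r$-dependence.} From $\Omega=\dd\Gamma+\Gamma\wedge\Gamma$ I read off $\Ric_{ij}$ and the Weyl scalars. The components $\Ric_{44}$, $\Ric_{24}$ and $\Ric_{22}$ are ODEs in $r$. They are exactly what forces the $\cos^{-1}(r/2)$ profile of $\cP$ and the two-mode form of $\cW$ in \eqref{eq:PW}. Since Theorem~\ref{thm:published-curvature} already packages this fibre integration, I specialise it to $t=s=0$ rather than redo it. Note that $t\equiv0$ trivially solves \eqref{eq:t-curvature}. Then:
\begin{enumerate}
\item The vanishing of $\Ric_{44},\Ric_{24},\Ric_{22}$ follows.
\item For $\Ric_{12}+\Ric_{34}$, the ansatz \eqref{eq:calH} for $\cH$ (with $Q$ and $h$ as in \eqref{eq:Q}, \eqref{eq:h}) is designed so that the $r$-Fourier modes $\e^{\pm2\I r},\e^{\pm\I r},1$ of this trace cancel to $2\Lambda$ identically. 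I will check mode by mode that the Fourier coefficients of $\Ric_{12}+\Ric_{34}$ reduce to this identity, using \eqref{eq:commutators} and \eqref{eq:c-integrability}.
\item The difference $\Ric_{12}-\Ric_{34}$ is then a single base function times a fixed function of $r$. Its vanishing is precisely \eqref{eq:scalar-curvature}, which is \eqref{eq:scalar-curvature-general} at $t=0$. This gives item~2.
\end{enumerate}

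\textbf{Remaining Ricci components (items 3 and 4).} Since $g_{12}=g_{34}=1$ and $\lambda=\theta^3/\cP$, equation \eqref{eq:quasi-einstein} is equivalent to the following. All of $\Ric_{11},\Ric_{13},\Ric_{14},\Ric_{44},\Ric_{24}$ and their conjugates vanish, $\Ric_{12}=\Ric_{34}=\Lambda$, and $\Ric_{33}$ is free; one then sets $\Phi=\cP^2\Ric_{33}$. I compute $\Ric_{13}$, substitute \eqref{eq:scalar-curvature} to eliminate second derivatives of $p$, and expect the remainder to be a nonvanishing function of $r$ times $\partial m+3cm$, which gives item~3. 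For $\Ric_{11}$ and $\Ric_{14}$, I expect them to vanish identically once \eqref{eq:scalar-curvature} and \eqref{eq:m-eqn} hold. Equivalently, they follow from the contracted Bianchi identity applied along the fibre: differentiating the already-established components in $r$ forces the remaining ones to be $r$-independent multiples of quantities that vanish.

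\textbf{Weyl scalars and the main obstacle.} For the Weyl scalars, $\Psi_0=\Psi_1=0$ follows from the congruence being geodesic and shearfree together with $\Ric_{44}=\Ric_{24}=0$ (Goldberg--Sachs \cite{GoldSa}), or can be read off directly from $\Omega$. Finally, $\Psi_2$ is extracted from $\Omega_{12}+\Omega_{34}$ restricted to $\theta^1\wedge\theta^2$ and $\theta^3\wedge\theta^4$; the $\cP^{-2}$ factor and the $\e^{2\I r}$ mode of $\cH$ combine into $(1+\e^{\I r})^3m/(2p^6)$. I expect the main obstacle to be this bookkeeping: the cancellation of the $\e^{\pm\I r}$ modes in step~(2) and in $\Ric_{13}$, which requires carefully using $[\partial,\bpd]=-\I\partial_0$ when commuting derivatives of $p$ and $c$.
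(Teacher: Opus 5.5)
Your outline is sound, and its backbone matches the paper's: everything rests on the Hill--Lewandowski--Nurowski reduction specialised to $t=s=0$. The paper's proof is only that citation. It reads item~1 off \cite[eqs.~(3.23)--(3.28)]{HLN2008}, item~2 off (3.29) and item~3 off (3.35), and takes $\Psi_2$ and the tensor equation from (3.36) and Remark~3.17. You instead propose to re-derive four things by Cartan calculus in the coframe \eqref{eq:null-coframe}: the trace identity, the factorisation of $\Ric_{12}-\Ric_{34}$, the reduction of $\Ric_{13}$, and the value of $\Psi_2$. You also obtain $\Psi_0=\Psi_1=0$ from the generalised Goldberg--Sachs (Kundt--Thompson) theorem rather than from the explicit Weyl tensor. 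This would independently check the transcribed coefficients \eqref{eq:Q}--\eqref{eq:calH}, and it gives an argument for $\Psi_0=\Psi_1=0$ that needs no Weyl computation. The price is a long calculation that the proposal only announces (``I expect''). So at present items~2--4 rest on the same source as the paper's proof, and citing \cite[eqs.~(3.35)--(3.36)]{HLN2008} directly is the economical choice.

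If you do carry the computation out, tighten four points.

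\emph{Conjugate components.} Since $e_2=\bar e_1$ and $e_3,e_4$ are real, $\Ric_{11}=\overline{\Ric_{22}}$, $\Ric_{14}=\overline{\Ric_{24}}$ and $\Ric_{23}=\overline{\Ric_{13}}$. So $\Ric_{11}$ and $\Ric_{14}$ vanish by item~1 alone, without \eqref{eq:scalar-curvature}, \eqref{eq:m-eqn} or a Bianchi argument, and $\Ric_{13}$ is the only new component.

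\emph{Goldberg--Sachs hypotheses.} The generalised theorem requires $\Ric_{22}=0$ (the $\Phi_{02}$ condition) in addition to $\Ric_{44}=\Ric_{24}=0$. Item~1 supplies it, but it must be listed among the hypotheses.

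\emph{Item~1 for arbitrary $p$, $m$.} Item~1 is asserted for arbitrary $p$ and $m$, whereas the converse in Theorem~\ref{thm:published-curvature} presupposes the scalar equation. You therefore need the component-by-component form of the fibre reduction, which is exactly what the paper's citation of the individual equations provides, or else a direct check of $\Ric_{44},\Ric_{24},\Ric_{22}$.

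\emph{Third derivatives in $\Ric_{13}$.} Because $Q$ already contains second derivatives of $p$, expect $\Ric_{13}$ to contain third derivatives, for instance through $\partial\,\partial_r\cH$, which involves $\partial Q$. Reducing $\Ric_{13}$ to a nonzero multiple of $\partial m+3cm$ will then require $\partial$ applied to \eqref{eq:scalar-curvature}, not only the equation itself.
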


\begin{proof}
We apply Theorem~\ref{thm:published-curvature} with the solution
$t\equiv0$ of \eqref{eq:t-curvature} and the phase $s=0$.
The stated identities follow from
\cite[eqs.~(3.23)--(3.29) and (3.35)--(3.37), Remark~3.17]{HLN2008}
with these choices.
In that reference, equations~(3.23)--(3.27)
give \eqref{eq:PW} and the first three vanishing Ricci components.
Equation~(3.28) gives \eqref{eq:Q}--\eqref{eq:calH}
and the trace condition.
Equation~(3.29) gives \eqref{eq:scalar-curvature}, and equation~(3.35) gives
\eqref{eq:m-eqn}. The Weyl scalar formula is equation~(3.36);
the tensor Ricci equation is explained in Remark~3.17.
\end{proof}

Once the conditions in the last part of
Proposition~\ref{prop:curvature-reduction} hold, only the component
$\Ric_{33}$ remains unconstrained.
Since $g_{33}=0$ and $\lambda(e_3)=\cP^{-1}$,
the radiation coefficient is
\begin{equation}\label{eq:Phi}
 \Phi=\cP^2\Ric_{33}.
\end{equation}
This identifies $\Phi$ without introducing an additional scalar equation.

\section{Local semilinear solvability for contact operators}
\label{sec:analysis}

We prove the smooth local existence statement needed for the reduced
scalar curvature equation. 

Let $\Omega\subset\bR^3$ be open, let $j\ge0$ be an integer,
and let $1<q<\infty$.
The notation $W^{j,q}_{\mathrm{loc}}(\Omega)$ denotes the
\emph{local Sobolev space}:
\begin{equation}\label{eq:sobolev-definition}
 u\in W^{j,q}_{\mathrm{loc}}(\Omega)
 \quad\Longleftrightarrow\quad
 \partial_x^\ell u\in L^q_{\mathrm{loc}}(\Omega)
 \quad\text{for every }|\ell|\le j.
\end{equation}
Here $\ell=(\ell_1,\ell_2,\ell_3)\in\mathbb N_0^3$,
$|\ell|=\ell_1+\ell_2+\ell_3$, and
$\partial_x^\ell
 =\partial_{x_1}^{\ell_1}\partial_{x_2}^{\ell_2}
   \partial_{x_3}^{\ell_3}$ is a weak derivative, understood in
the distributional sense.
The subscript $\mathrm{loc}$ means that these integrability
conditions hold on every relatively compact open set
$V\Subset\Omega$.
Equivalently, $u|_V\in W^{j,q}(V)$ for every such $V$.
Thus $W^{0,q}_{\mathrm{loc}}=L^q_{\mathrm{loc}}$, and
$W^{1,4}_{\mathrm{loc}}$ consists of functions whose values
and first weak coordinate derivatives are locally
fourth-power integrable.
These are ordinary coordinate Sobolev spaces.
The metric coefficient is always written $\cW$, whereas
the plain letter $W$ in \eqref{eq:sobolev-definition}
denotes a function space.
We choose $q=4$ because the coordinate domain has dimension three.
The Sobolev--Morrey embedding theorem
\cite[Theorem~4.12, Part~II]{AF2003}, applied locally on balls, gives
\[
 W^{1,q}_{\mathrm{loc}}(\Omega)
 \hookrightarrow C^{0,\alpha}_{\mathrm{loc}}(\Omega),
 \qquad \alpha=1-\frac{3}{q},\qquad 3<q<\infty.
\]
This means that every function in $W^{1,q}_{\mathrm{loc}}(\Omega)$
has a locally H\"older continuous representative equal to it almost
everywhere. In particular, taking $q=4$ gives
\[
 W^{1,4}_{\mathrm{loc}}(\Omega)
 \hookrightarrow C^{0,1/4}_{\mathrm{loc}}(\Omega).
\]
More explicitly, for balls $B\Subset B'\Subset\Omega$, this
representative satisfies
\[
 |u(x)-u(y)|
 \le C\,\|u\|_{W^{1,4}(B')}\,|x-y|^{1/4},
 \qquad x,y\in B,
\]
where $C$ depends on the balls but not on $u$.
Here $\Subset$ denotes compact containment.
Since the argument is local, no regularity assumption on
$\partial\Omega$ is required.
Thus the first coordinate Sobolev regularity gain yields a H\"older
continuous representative. The subsequent regularity gains,
together with the local product and composition rules, then yield
smoothness.

\begin{lemma}\label{lem:bootstrap}
Let $X,Y$ be smooth real vector fields on an open set
$\Omega\subset\bR^3$ such that
\begin{equation}\label{eq:contact-bracket}
 \Span\{X,Y,[X,Y]\}=T\Omega.
\end{equation}
For smooth real functions $b_1,b_2$ set
\begin{equation}\label{eq:contact-operator}
 \cL=X^2+Y^2+X_0,\qquad X_0=b_1X+b_2Y.
\end{equation}
Let $\mathcal J\subset\bR$ be an open interval and
$\cN\in C^\infty(\Omega\times\mathcal J;\bR)$.
Suppose $u\in L^\infty_{\mathrm{loc}}(\Omega)$ solves
$\cL u=\cN(x,u)$ in distributions. Assume that on each relatively
compact subdomain of $\Omega$, the essential range of $u$ lies
in a compact subinterval of $\mathcal J$.
Then $u$ has a smooth representative.
\end{lemma}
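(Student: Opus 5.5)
The plan is a standard bootstrap through the local Sobolev estimates of Rothschild--Stein \cite{RS1976}. The operator $\cL=X^2+Y^2+b_1X+b_2Y$ is of H\"ormander type. By \eqref{eq:contact-bracket}, the fields $X,Y$ together with their brackets of length two span $T\Omega$, so the bracket step is $2$. For such operators, \cite{RS1976} gives the following local estimate. If $\cL u\in W^{k,q}_{\mathrm{loc}}$ with $1<q<\infty$, then $u\in W^{k+1,q}_{\mathrm{loc}}$ in coordinate Sobolev spaces; more precisely, one gains two derivatives along $X,Y$ and hence $2/2=1$ full coordinate derivative. This holds under the a priori assumption that $u$ is a distribution in $L^q_{\mathrm{loc}}$. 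The drift $X_0$ is a first-order horizontal term, so it is admissible in the Rothschild--Stein framework; alternatively one can treat it perturbatively after the gain of horizontal derivatives. I will use the estimate in the form of the whole-number gain: if $u\in L^q_{\mathrm{loc}}$ and $\cL u\in W^{k,q}_{\mathrm{loc}}$, then $u\in W^{k+1,q}_{\mathrm{loc}}$.

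The bootstrap proceeds in four steps, all with $q=4$.

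\emph{Step 1.} Since $u\in L^\infty_{\mathrm{loc}}$ takes values a.e.\ in a compact subinterval $K\Subset\mathcal J$ on each $V\Subset\Omega$, and $\cN$ is continuous on $\overline V\times K$, we get $\cN(x,u)\in L^\infty(V)\subset L^4(V)$. Hence $u\in W^{1,4}_{\mathrm{loc}}$.

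\emph{Step 2.} By the Morrey embedding recorded above, $u$ has a H\"older continuous representative. Its values still lie in $K$ locally, because the essential range is contained in the closed set $K$ and the representative is continuous.

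\emph{Step 3.} Induction on $k$. Suppose $u\in W^{k,4}_{\mathrm{loc}}\cap C^0$ with $k\ge1$ and values in $K$ on $V$. By the chain rule for compositions of a smooth function with a bounded Sobolev function, $\cN(x,u)\in W^{k,4}_{\mathrm{loc}}$. The key point is the Fa\`a di Bruno expansion: each term is a product of $(\partial^\beta_{x,u}\cN)(x,u)$, which is bounded and continuous, with factors $\partial^{\ell_1}u\cdots\partial^{\ell_j}u$ where $\sum|\ell_i|\le k$. For $k=1$ this is immediate. For $k\ge2$, I would note that $W^{k,4}_{\mathrm{loc}}$ in dimension three is an algebra for $k\ge1$, since $4>3$. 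Then Gagliardo--Nirenberg interpolation bounds the products, or more simply the product lies in $L^4$ by $L^\infty$ bounds on lower derivatives obtained from the embedding $W^{k,4}\subset C^{k-1,1/4}$. Indeed, each product has at most one factor of top order $k$, and all other factors have order $\le k-1$ and are therefore continuous. Then Rothschild--Stein gives $u\in W^{k+1,4}_{\mathrm{loc}}$.

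\emph{Step 4.} Hence $u\in\bigcap_k W^{k,4}_{\mathrm{loc}}\subset C^\infty$ by Morrey again.

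The main obstacle is Step 1 and the precise form of the estimate invoked. The Rothschild--Stein estimates are usually stated as a priori estimates for smooth compactly supported functions. Applying them to a merely $L^\infty$ distributional solution requires either the hypoellipticity-based regularisation argument or their statement for distributions. I would handle this by mollification along coordinates and a commutator estimate, or more cleanly by citing their Theorem 18 form, which asserts that $\cL u\in L^q_{\mathrm{loc}}$ implies $u\in S^{2,q}_{\mathrm{loc}}\subset W^{1,q}_{\mathrm{loc}}$ for distributions $u$. The inclusion $S^{2,q}\subset W^{1,q}$ for step-two fields is also part of their comparison theorems. A localisation by a cutoff $\chi$ introduces the terms $[\cL,\chi]u$. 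These involve $Xu$ and $Yu$, so an intermediate gain to $S^{1,q}$ is needed first. This is handled within their local theory, and I would take it as given.
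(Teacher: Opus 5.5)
Your proposal is essentially the paper's proof: the Rothschild--Stein gain with $q=4$, Morrey's embedding for a continuous representative with values in the compact interval, the composition rule for $\cN(x,u)$, and alternation up to $C^\infty$. The differences are minor: you use the compact range directly rather than cutting $\cN$ off in the second variable, and you write out the Fa\`a di Bruno bookkeeping. One point you share with the paper: for $j\ge1$ the whole-number gain $\cL u\in W^{j,4}_{\mathrm{loc}}\Rightarrow u\in W^{j+1,4}_{\mathrm{loc}}$ does not follow just by combining Rothschild--Stein's Theorems~16(a) and~18, which give only $W^{j,4}\subset S^4_j\to S^4_{j+2}\subset W^{(j+2)/2,4}$. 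In step two it does follow from them by applying 16(a) inductively to $\partial_x^\beta u$ and controlling the commutators $[\cL,\partial_x^\beta]u$; alternatively, you can bypass it by running your bootstrap in the spaces $S^4_k$.
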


\begin{proof}
Assign weight one to $X,Y$ and weight two to $X_0$.
The weighted bracket condition has step two by
\eqref{eq:contact-bracket}, including at zeros of $X_0$.
The regularity estimates in \cite[Theorems~16(a) and~18]{RS1976}
therefore give, for every integer $j\ge0$,
\begin{equation}\label{eq:sobolev-gain}
 u\in L^4_{\mathrm{loc}},\quad
 \cL u\in W^{j,4}_{\mathrm{loc}}
 \quad\Longrightarrow\quad
 u\in W^{j+1,4}_{\mathrm{loc}}.
\end{equation}
These are ordinary coordinate Sobolev spaces, corresponding to
the integer-order Bessel-potential spaces in that reference.
The differentiability gain is $2/2=1$, and the result applies
to the equation in the weak sense.

On any relatively compact neighbourhood on which the compact-range
hypothesis holds, both $u$ and $\cN(x,u)$ are bounded.
Taking $j=0$ in \eqref{eq:sobolev-gain} gives
$u\in W^{1,4}_{\mathrm{loc}}$. The Sobolev--Morrey embedding
above therefore gives a representative in
$C^{0,1/4}_{\mathrm{loc}}(\Omega)$.
Using a smooth cut-off in the second variable supported in
$\mathcal J$, extend the nonlinearity smoothly to all real values
of that variable, leaving it unchanged near this compact range.
The local Sobolev composition
rules give
\begin{equation}\label{eq:composition}
 u\in W^{j,4}_{\mathrm{loc}}
 \quad\Longrightarrow\quad
 \cN(x,u)\in W^{j,4}_{\mathrm{loc}},
 \qquad j\ge1.
\end{equation}
For $j=1$ this is the weak chain rule; for higher $j$ it follows
from the higher-order composition rule and the Sobolev algebra
property, since $4>3$.
Alternating \eqref{eq:composition} with \eqref{eq:sobolev-gain}
gives $u\in W^{j,4}_{\mathrm{loc}}$ for every $j$.
Sobolev embedding yields a smooth representative.
\end{proof}

Lemma~\ref{lem:bootstrap} shows that a bounded distributional
solution is smooth provided its values remain in a compact
subinterval of $\mathcal J$.
It remains to construct such a solution with control of its range.
The following theorem does so by a contraction on a small control
ball. For a positive reference value, this control also ensures
the positivity required by the reduced curvature equation
\eqref{eq:scalar-curvature}.

\begin{theorem}\label{thm:semilinear}
Assume \eqref{eq:contact-bracket}--\eqref{eq:contact-operator}
near $x_0\in\bR^3$.
Let $\mathcal J\subset\bR$ be an open interval and let
$\cN\in C^\infty(\Omega\times\mathcal J;\bR)$.
For every $p_*\in\mathcal J$ and every $\delta>0$ with
$[p_*-\delta,p_*+\delta]\Subset\mathcal J$, there is a neighbourhood
$V$ of $x_0$ and a smooth solution $p$ satisfying
\begin{equation}\label{eq:semilinear}
 \cL p=\cN(x,p)\quad\hbox{on }V,\qquad |p-p_*|\le\delta.
\end{equation}
If $p_*>0$ and $\delta<p_*/2$, this solution is positive.
\end{theorem}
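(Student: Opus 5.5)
Write $p=p_*+v$, so the equation becomes $\cL v=\cN(x,p_*+v)$ because $\cL$ annihilates constants. The operator $\cL$ has no zeroth-order term and satisfies H\"ormander's condition by \eqref{eq:contact-bracket}. The local fundamental solution of \cite{BBMP2017} therefore applies: on a small neighbourhood $\Omega_0$ of $x_0$ there is a kernel $\Gamma(x,y)$ with $\cL\int\Gamma(\cdot,y)f(y)\,\dd y=f$ for bounded $f$ in the distributional sense. It also satisfies the standard bound $|\Gamma(x,y)|\le C\,d(x,y)^2/|B(x,d(x,y))|$ in terms of the control distance $d$. Let $B_\varepsilon$ be the control ball of radius $\varepsilon$ about $x_0$. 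Splitting into dyadic control annuli and using the doubling property gives the key estimate
\[
 \sup_{x\in B_\varepsilon}\int_{B_\varepsilon}|\Gamma(x,y)|\,\dd y\le C_0\varepsilon^2,
\]
with $C_0$ independent of small $\varepsilon$. I regard securing this estimate, together with the representation property of $\Gamma$ for merely bounded right-hand sides, as the main obstacle. Everything else is routine once it holds.

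Next, define $T$ on $K=\{v\in L^\infty(B_\varepsilon):\|v\|_\infty\le\delta\}$ by
\[
 (Tv)(x)=\int_{B_\varepsilon}\Gamma(x,y)\,\cN\big(y,p_*+v(y)\big)\,\dd y.
\]
On $\overline{\Omega_0}\times[p_*-\delta,p_*+\delta]$, which is compact inside $\Omega\times\mathcal J$, let $A=\sup|\cN|$ and let $L=\sup|\partial_u\cN|$. The key estimate gives $\|Tv\|_\infty\le C_0A\varepsilon^2$ and $\|Tv-Tw\|_\infty\le C_0L\varepsilon^2\|v-w\|_\infty$. Now choose $\varepsilon$ with $C_0A\varepsilon^2\le\delta$ and $C_0L\varepsilon^2\le\frac12$. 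Then $T$ maps the complete metric space $K$ into itself as a contraction, and Banach's theorem yields a fixed point $v$.

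By the representation property, $p=p_*+v$ satisfies $\cL p=\cN(x,p)$ in distributions on $V=B_\varepsilon$. Its essential range lies in $[p_*-\delta,p_*+\delta]\Subset\mathcal J$, so Lemma~\ref{lem:bootstrap}, applied on $V$ with $\Omega$ replaced by $V$, gives a smooth representative. That representative is continuous and equal to $p$ almost everywhere, so the pointwise bound $|p-p_*|\le\delta$ persists everywhere on $V$. Finally, if $p_*>0$ and $\delta<p_*/2$, then $p\ge p_*-\delta>p_*/2>0$, which proves positivity.

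If the kernel estimate in the needed form is not directly available, I would fall back on the homogeneous-space structure. Ball--Box volume bounds give $|B(x,\rho)|\approx\rho^{Q(x)}$ locally with uniform doubling, and summing $\rho^2$ over the annuli $2^{-k-1}\varepsilon\le d<2^{-k}\varepsilon$ gives a geometric series bounded by $C\varepsilon^2$.
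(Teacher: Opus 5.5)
Your proposal is correct and follows essentially the same route as the paper: the local fundamental solution of \cite{BBMP2017} with its $O(\varepsilon^2)$ integral bound on a small control ball, a Banach contraction in $L^\infty$ around $p_*$, and then the bootstrap of Lemma~\ref{lem:bootstrap}. The differences are cosmetic. You integrate against the indicator of $B_\varepsilon$ rather than the paper's cut-off $\chi_\varepsilon$, which is harmless and gives the equation on all of $B_\varepsilon$ instead of $B_{\varepsilon/2}$. The representation property for bounded data, which you flag as an obstacle but do not prove, is obtained in the paper by pairing $v$ with $\cL^*\psi$ and applying Fubini, using $\cL_x\gamma(\cdot,y)=-\delta_y$ together with the kernel bound for absolute integrability.
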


\begin{proof}
Work in a fixed relatively compact coordinate neighbourhood where
the coefficients are smooth and the bracket condition holds.
The smooth vector fields satisfy the hypotheses of
\cite[Theorem~4.8]{BBMP2017}, which gives a local fundamental
solution $\gamma(x,y)$
on a sufficiently small fixed neighbourhood $U_0$ of $x_0$ such that
\begin{equation}\label{eq:fundamental}
 \cL_x\gamma(x,y)=-\delta_y,\qquad
 |\gamma(x,y)|\le C\phi_2(x,y).
\end{equation}
We may take $\gamma$ real by taking its real part.
Let $d$ be the weighted control distance used in that reference,
associated with $X,Y,X_0$ and the weights specified above.
We use this distance throughout.
For the fixed upper integration radius $\rho_0>0$ used in that
construction and for $\nu>0$, the comparison kernels are
\[
 \phi_\nu(x,y)=\int_{d(x,y)}^{\rho_0}
       \frac{t^{\nu-1}}{|B_d(x,t)|}\,\dd t.
\]
All balls considered below lie within the fixed neighbourhood on
which these estimates hold. Since the Euclidean dimension is three and
$2<3$, the kernel estimate in
\cite[Corollary~3.4]{BBMP2017} gives
\begin{equation}\label{eq:kernel-integral}
 \sup_x\int_{d(x,y)<\rho}\phi_2(x,y)\,\dd y\le C\rho^2
\end{equation}
for small $\rho$, uniformly on a smaller fixed neighbourhood.
Set $B_\varepsilon=B_d(x_0,\varepsilon)$, with
$\overline{B_\varepsilon}\subset U_0$.
For $x,y\in B_\varepsilon$, the control-distance triangle estimate
gives $d(x,y)\le C\varepsilon$.
Applying \eqref{eq:kernel-integral} at this radius and using
\eqref{eq:fundamental} gives
\begin{equation}\label{eq:small-kernel}
 \sup_{x\in B_\varepsilon}
       \int_{B_\varepsilon}|\gamma(x,y)|\,\dd y
       \le C\varepsilon^2.
\end{equation}
Here $C$ is independent of $\varepsilon$:
the fundamental solution is fixed before its source support
is reduced.

Choose a smooth cut-off $0\le\chi_\varepsilon\le1$, supported in
$B_\varepsilon$ and equal to one on $B_{\varepsilon/2}$.
These cut-offs exist because the control distance induces the
coordinate topology and the closed smaller ball is compactly
contained in the larger one. No derivative bounds for the cut-offs
enter the contraction.
On the complete closed ball
\[
 \mathcal B_\delta
  =\{v\in L^\infty(B_\varepsilon;\bR):\|v\|_\infty\le\delta\},
\]
define
\begin{equation}\label{eq:fixed-point}
 \mathcal T(v)(x)
  =-\int_{B_\varepsilon}\gamma(x,y)\chi_\varepsilon(y)
           \cN(y,p_*+v(y))\,\dd y.
\end{equation}
Choose finite constants $M_0,M_1$ bounding $|\cN(x,\xi)|$ and
$|\partial_\xi\cN(x,\xi)|$ on a fixed compact coordinate neighbourhood
times $[p_*-\delta,p_*+\delta]$.
Equation~\eqref{eq:small-kernel} and the mean-value theorem give
\begin{align}
 \|\mathcal T(v)\|_\infty
     &\le C\varepsilon^2M_0,\label{eq:self-map}\\
 \|\mathcal T(v_1)-\mathcal T(v_2)\|_\infty
     &\le C\varepsilon^2M_1\|v_1-v_2\|_\infty.
       \label{eq:contraction}
\end{align}
Take $\varepsilon$ small enough that
$C\varepsilon^2M_0\le\delta$ and $C\varepsilon^2M_1<1$.
Banach's fixed-point theorem gives
$v\in\mathcal B_\delta$ with $\mathcal T(v)=v$.
Set $p=p_*+v$.

Put $f=\chi_\varepsilon\cN(\cdot,p)$ on $B_\varepsilon$
and extend it by zero to $U_0$.
Let $\cL^*$ be the formal adjoint for coordinate Lebesgue measure.
For $\psi\in C_c^\infty(B_\varepsilon)$, the
fundamental-solution identity gives
\begin{align*}
 \int_{B_\varepsilon}v(x)\cL^*\psi(x)\,\dd x
 &=-\int_{B_\varepsilon}f(y)
       \left(\int_{U_0}\gamma(x,y)\cL^*\psi(x)\,\dd x\right)\dd y\\
 &=\int_{B_\varepsilon}f(y)\psi(y)\,\dd y.
\end{align*}
Fubini's theorem applies: integrating \eqref{eq:small-kernel}
in $x$ gives absolute integrability of the double integral,
since $f$ and $\cL^*\psi$ are bounded and supported in
$B_\varepsilon$.
Thus $\cL v=f$ in distributions. Since $\cL p_*=0$,
\[
 \cL p=\cN(x,p)\quad\hbox{on }B_{\varepsilon/2}.
\]

The essential range of $p$ lies in the compact interval
$[p_*-\delta,p_*+\delta]\Subset\mathcal J$.
Lemma~\ref{lem:bootstrap} gives a smooth representative on
$B_{\varepsilon/2}$. Its values still lie in the same closed
interval, by continuity and the almost-everywhere bound.
Take $V=B_{\varepsilon/2}$, or a smaller coordinate neighbourhood
of $x_0$. For $p_*>0$ and $\delta<p_*/2$,
$p\ge p_*-\delta>p_*/2$.
\end{proof}

\begin{remark}\label{rem:analytic-scope}
The construction is an interior local existence argument and imposes
no boundary data. Estimate~\eqref{eq:small-kernel} gives an
$O(\varepsilon^2)$ bound for the $L^\infty$ norm of the underlying
linear integral operator; the nonlinear map $\mathcal T$ is a
contraction by \eqref{eq:contraction}.
The chosen fundamental solution and cut-off select one fixed point;
uniqueness of that fixed point does not imply uniqueness of all
local solutions of \eqref{eq:semilinear}.
\end{remark}

\section{Proof of the main theorem}\label{sec:proof}

\begin{proof}[Proof of Theorem~\ref{thm:main}]
Fix $x_0\in M$, $\Lambda\in\bR$, $p_*>0$ and
$0<\delta<p_*/2$.
Choose a smooth local CR embedding $(\zeta,\eta)$ into $\bC^2$
on a coordinate neighbourhood $U_0$ of $x_0$.
After a complex linear change of the embedding coordinates and
shrinking $U_0$, arrange $\dd\zeta|_D\ne0$ and use the normalised
CR coframe \eqref{eq:normalized-structure} with $\mu=\dd\zeta$.
The dual frame and the function $c$ are those of
Section~\ref{sec:geometry}.
We use the choice $t=s=0$ fixed in
Section~\ref{subsec:curvature-reduction}.

We first choose $m$ so that the first-order constraint
\eqref{eq:m-eqn} holds and the Weyl scalar $\Psi_2$ will be
nowhere zero.
Since $\mu=\dd\zeta$, one has $\partial_0\zeta=0$.
The differential of the embedding is injective, so
$\partial_0\eta\ne0$; otherwise the nonzero real vector
$\partial_0$ would annihilate both embedding coordinates.
Define
\begin{equation}\label{eq:m-def}
 m=(\partial_0\bar\eta)^3.
\end{equation}
The CR condition $\bpd\eta=0$ implies
$\partial\bar\eta=0$.
Consequently, \eqref{eq:commutators} gives
\[
 \partial(\partial_0\bar\eta)
   =[\partial,\partial_0]\bar\eta
       +\partial_0(\partial\bar\eta)
   =-c\,\partial_0\bar\eta.
\]
Thus $m$ is smooth and nowhere zero, and
$\partial m=-3cm$, proving \eqref{eq:m-eqn}.

It remains to solve the real scalar equation
\eqref{eq:scalar-curvature} for $p>0$ with this fixed $m$.
Define real vector fields
\begin{equation}\label{eq:XY}
 X=\partial+\bpd,\qquad
 Y=\I(\partial-\bpd),\qquad
 \partial=\tfrac12(X-\I Y).
\end{equation}
Then
\begin{equation}\label{eq:real-identities}
 \begin{aligned}
 \partial\bpd+\bpd\partial&=\tfrac12(X^2+Y^2),\\
 c\bpd+\bar c\,\partial&=\Re(c)X-\Im(c)Y.
 \end{aligned}
\end{equation}
By \eqref{eq:commutators}, $[X,Y]=-2\partial_0$.
Thus $X,Y,[X,Y]$ span the tangent space at every point of $U_0$.
The function
\begin{equation}\label{eq:V}
 V=\tfrac12|c|^2+\tfrac34(\partial\bar c+\bpd c)
\end{equation}
is real, since $\overline{\partial\bar c}=\bpd c$.
Multiplying \eqref{eq:scalar-curvature} by two and moving its
zeroth-order linear term to the right gives
\begin{equation}\label{eq:scalar-real}
 \big[X^2+Y^2+2\Re(c)X-2\Im(c)Y\big]p
 =2(m+\bar m)p^{-3}+\tfrac43\Lambda p^3-2Vp.
\end{equation}
This has the contact operator \eqref{eq:contact-operator}, with
$b_1=2\Re(c)$ and $b_2=-2\Im(c)$, and the real nonlinearity
\begin{equation}\label{eq:curvature-nonlinearity}
 \cN(x,\xi)=2\big(m(x)+\bar m(x)\big)\xi^{-3}
              +\tfrac43\Lambda\xi^3-2V(x)\xi,\qquad \xi>0.
\end{equation}
Hence $\cN\in C^\infty(U_0\times(0,\infty);\bR)$.
Applying Theorem~\ref{thm:semilinear} with
$\mathcal J=(0,\infty)$ gives a neighbourhood $U\subset U_0$
of $x_0$ and a smooth solution $p>0$ of
\eqref{eq:scalar-curvature} with $|p-p_*|\le\delta$.
The prescribed value of $\Lambda$ enters only as a smooth
coefficient of $\cN$, so the argument imposes no restriction on
its sign.

Choose an open interval $I\ni0$ with $I\subset(-\pi,\pi)$,
and define $\cP,\cW,Q,h,\cH$ by
\eqref{eq:PW}--\eqref{eq:calH}.
Since $p>0$ and $\cos(\frac{r}{2})>0$ on $I$, these coefficients
are smooth and $\cP>0$ on $U\times I$.
The estimate for $p$ and the definition of $\cP$ give
\eqref{eq:p-control-main}.
Both reduced equations \eqref{eq:scalar-curvature} and
\eqref{eq:m-eqn} now hold.
Proposition~\ref{prop:curvature-reduction} therefore gives
\eqref{eq:quasi-einstein}, with the smooth real coefficient
$\Phi=\cP^2\Ric_{33}$, and
\[
 \Psi_0=\Psi_1=0,\qquad
 \Psi_2=\frac{(1+\e^{\I r})^3}{2p^6}\,m\ne0.
\]
Indeed, $m$ is nowhere zero, $p>0$, and
$1+\e^{\I r}\ne0$ on $I$.
The metric therefore has Petrov type~$\mathrm{II}$ or $\mathrm{D}$
at every point of $U\times I$.
This completes the proof.
\end{proof}



\end{document}